\journalname{Calculus of Variations and Partial Differential Equations}
\newcommand{\om}{\Omega}
\newcommand{\R}{\mathbb{R}}
\newcommand{\ep}{\varepsilon}
\newcommand{\dom}{{\rm dom}\,}
\newcommand{\be}{\begin{eqnarray}}
\newcommand{\ee}{\end{eqnarray}}
\renewcommand{\leq}{\leqslant}
\renewcommand{\geq}{\geqslant}
\newcommand{\supp}{{\rm supp}\,}
\newcommand{\cof}{{\rm cof}}
\newcommand{\1}{{\mathbf 1}}
\newcommand{\av}{-\hspace{-.15in}\int}
\begin{document}

\title{Extension of convex functions from a hyperplane to a half-space}

\author{John M. Ball    \and  Christopher L. Horner    
}

%\authorrunning{Short form of author list} % if too long for running head

\institute{
             Department of Mathematics, Heriot-Watt University and Maxwell Institute for Mathematical Sciences, Edinburgh. \email{jb101@hw.ac.uk}}
             
             % \email{jb101@hw.ac.uk}           %  \\
%             \emph{Present address:} of F. Author  %  if needed

\date{}

%%==================================%%
%% sample for unstructured abstract %%
%%==================================%%

%%\pacs[JEL Classification]{D8, H51}

%%\pacs[MSC Classification]{35A01, 65L10, 65L12, 65L20, 65L70}

\maketitle

\begin{abstract} It is shown that a possibly infinite-valued proper lower semicontinuous convex function on $\R^n$ has an extension to a convex function on the half-space $\R^n\times[0,\infty)$ which is finite and smooth on the open half-space $\R^n\times(0,\infty)$.  The result is applied to nonlinear elasticity, where it clarifies  how the condition of polyconvexity of the free-energy density $\psi(Dy)$ is best expressed when $\psi(A)\to\infty$ as $\det A\to 0+$. 
\end{abstract}
\keywords{convex extension, infimal convolution, polyconvex}
\section{Introduction}
\label{intro}
The main purpose of this paper is to prove the following theorem, giving an extension of a possibly infinite-valued proper lower semicontinuous convex function on $\R^n$ to a convex function on the half-space $\R^n\times [0,\infty)$ which is finite and smooth on the open half-space $\R^n\times (0,\infty)$. \\

\begin{theorem}
\label{lifting}
Let $\Phi:\R^n\to (-\infty,\infty]$ be a proper lower semicontinuous convex function. Then there exists a lower semicontinuous convex function $$\varphi:[0,\infty)\times\R^n \to (-\infty,\infty],\; \varphi=\varphi(x,y),$$such that

$(i)$ $\varphi(0,y)=\Phi(y)$ for all $y\in \R^n$,

$(ii)$  $\lim_{x\to 0+}\varphi(x,y)= \Phi(y)$ for each $y\in \R^n$.

$(iii)$ $\varphi:(0,\infty)\times\R^n\to \R$ is smooth,\\
If $\Phi\geq 0$, then $\varphi$ can be chosen so that $\varphi\geq 0$, and if $\Phi:\R^n\to (-\infty,\infty]$ is continuous, $\varphi:[0,\infty)\times\R^n \to (-\infty,\infty]$ can be chosen to be continuous. If $\Phi$ is strictly convex on ${\rm dom}\,\Phi:=\{y\in\R^n:\Phi(y)<\infty\}$ then $\varphi$ can be chosen to be strictly convex on $(0,\infty)\times\R^n$.
\end{theorem}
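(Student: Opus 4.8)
My plan is to build $\varphi$ in two steps: first a Moreau–Yosida regularization of $\Phi$ in the $y$-variable, with parameter proportional to $x$, which already yields a jointly convex, $C^{1,1}$ object with the correct trace on $\{x=0\}$; then a mollification carried out at a scale proportional to $x$ — so the mollification never reaches the hyperplane — which upgrades $C^{1,1}$ to $C^\infty$ while preserving joint convexity and the boundary data. Concretely, for $\lambda>0$ put $\Phi_\lambda(y)\eqdef\inf_{z\in\R^n}\{\Phi(z)+\tfrac1{2\lambda}|y-z|^2\}$, with the standard properties that $\Phi_\lambda:\R^n\to\R$ is finite, convex, of class $C^{1,1}$ with $0\preceq\nabla^2\Phi_\lambda\preceq\lambda^{-1}I$ a.e., $\Phi_\lambda\le\Phi$, $\Phi_\lambda\uparrow\Phi$ as $\lambda\downarrow0$, $\Phi_\lambda\ge0$ whenever $\Phi\ge0$, and $\Phi_\lambda$ strictly convex on $\R^n$ whenever $\Phi$ is strictly convex on $\dom\Phi$ (a split-point argument: equality in Jensen forces the quadratic parts of the optimal splittings to coincide and hence $\Phi$ affine on a segment of $\dom\Phi$). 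Set $\psi(x,y)\eqdef\Phi_{x/2}(y)$ for $x>0$ and $\psi(0,y)\eqdef\Phi(y)$. Since $(x,w)\mapsto\tfrac1x|w|^2$ is the perspective of $|\cdot|^2$ — jointly convex on $(0,\infty)\times\R^n$, with lower semicontinuous hull on $[0,\infty)\times\R^n$ equal to the indicator of $\{w=0\}$ — partial minimization in $z$ of the jointly convex function $\Phi(z)+\tfrac1x|y-z|^2$ shows $\psi$ is convex on $[0,\infty)\times\R^n$ with $\psi(0,\cdot)=\Phi$; using $\Phi_\lambda\ge\Phi_{\lambda_0}$ for $\lambda\le\lambda_0$ together with continuity of $\Phi_{\lambda_0}$ one gets that $\psi$ is lower semicontinuous on $[0,\infty)\times\R^n$, continuous on $(0,\infty)\times\R^n$, and continuous into $(-\infty,\infty]$ on $[0,\infty)\times\R^n$ whenever $\Phi$ is (here also using $\Phi_\lambda\le\Phi$).

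Next I would fix $K\in C_c^\infty((-\half,\half)\times B)$ with $B$ the open unit ball of $\R^n$, $K\ge0$, $K>0$ on $(-\half,\half)\times B$, $\iint K=1$, and set
$$\varphi_0(x,y)\eqdef\iint K(s,r)\,\psi\bigl(x(1+s),\,y+xr\bigr)\,\d s\,\d r\quad(x>0),\qquad \varphi_0(0,y)\eqdef\Phi(y),$$
and finally $\varphi(x,y)\eqdef\varphi_0(x,y)+x^2$. For each fixed $(s,r)$ the map $(x,y)\mapsto(x(1+s),y+xr)$ is linear and sends $[0,\infty)\times\R^n$ into itself (as $1+s>\tfrac12$), so $(x,y)\mapsto\psi(x(1+s),y+xr)$ is convex and lower semicontinuous there; averaging against $K$ and invoking Fatou (with the locally uniform lower bound on $\psi$ coming from an affine minorant of $\Phi$) gives that $\varphi$ is convex and lower semicontinuous on $[0,\infty)\times\R^n$ with $\varphi_0(0,y)=\iint K\,\Phi(y)=\Phi(y)$, which is $(i)$; moreover $\varphi\ge0$ when $\Phi\ge0$, and $\varphi$ is continuous into $(-\infty,\infty]$ when $\Phi$ is. Property $(ii)$ follows because the arguments $(x(1+s),y+xr)$ tend to $(0,y)$, $\psi$ is lower semicontinuous, $\psi(x(1+s),y+xr)\le\Phi(y)+x|r|^2/(1+s)\to\Phi(y)$, and $x^2\to0$ (the case $\Phi(y)=+\infty$ being handled by Fatou).

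For $(iii)$, fix a compact $Q\Subset(0,\infty)\times\R^n$; on $\{x>0\}$ substitute $\sigma=x(1+s)$, $\tau=y+xr$ to write $\varphi_0(x,y)=\iint\psi(\sigma,\tau)\,\tilde K(x,y;\sigma,\tau)\,\d\sigma\,\d\tau$ with $\tilde K(x,y;\sigma,\tau)=x^{-(n+1)}K\bigl(\tfrac{\sigma}{x}-1,\tfrac{\tau-y}{x}\bigr)$. For $(x,y)\in Q$ the integration runs over a fixed compact subset of $(0,\infty)\times\R^n$ on which $\psi$ is bounded, while $\tilde K$ and all its $(x,y)$-derivatives are bounded there, so differentiation under the integral sign gives $\varphi_0\in C^\infty((0,\infty)\times\R^n)$, hence $\varphi\in C^\infty((0,\infty)\times\R^n)$.

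For the strict-convexity assertion, note $\nabla^2\varphi=\nabla^2\varphi_0+\diag(2,0,\dots,0)$ with $\nabla^2\varphi_0\succeq0$, so it suffices to prove $\nabla^2_{yy}\varphi_0(x,y)\succ0$ on $(0,\infty)\times\R^n$ when $\Phi$ is strictly convex on $\dom\Phi$. Moving the $y$-derivatives onto the smooth kernel $\tilde K$ (using $\partial_{y_j}\tilde K=-\partial_{\tau_j}\tilde K$) and integrating by parts twice in $\tau$ gives $\nabla^2_{yy}\varphi_0(x,y)=\iint\nabla^2\Phi_{\sigma/2}(\tau)\,\tilde K(x,y;\sigma,\tau)\,\d\sigma\,\d\tau$, an average of the a.e.-defined, bounded, positive semidefinite Hessians of the Moreau envelopes against a weight positive on an open set. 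If $v^{\mathsf T}\nabla^2_{yy}\varphi_0\,v=0$ for some $v\ne0$, then $v^{\mathsf T}\nabla^2\Phi_\mu(\tau)v=0$ on an open set of $(\mu,\tau)$; since the resolvent $J_\mu=\mathrm{Id}-\mu\nabla\Phi_\mu$ is nonexpansive with $\nabla J_\mu=I-\mu\nabla^2\Phi_\mu$ symmetric, $0\preceq\nabla J_\mu\preceq I$, and there $v^{\mathsf T}\nabla J_\mu v=|v|^2$, it follows that $\partial_v J_\mu=v$ there, i.e. $J_\mu$ translates by $v$ along $v$-lines; then $\nabla\Phi_\mu$ is constant along those lines, so a fixed subgradient of $\Phi$ persists at every point of a nondegenerate segment inside $\dom\Phi$, forcing $\Phi$ to be affine there — a contradiction. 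Hence $\nabla^2\varphi\succ0$ on $(0,\infty)\times\R^n$. I expect this last point — extracting strict convexity of $\Phi$ through the merely $C^{1,1}$ Moreau envelope and its resolvent — to be the main technical obstacle; by contrast, the conceptual crux is the choice in the second step of a mollification whose scale vanishes with $x$, which is exactly what reconciles $C^\infty$ regularity with joint convexity and the prescribed trace $\varphi(0,\cdot)=\Phi$.
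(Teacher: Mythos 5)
Your construction coincides with the paper's. The preliminary extension $\psi(x,y)=\Phi_{x/2}(y)$ is exactly the paper's infimal convolution $\tilde\varphi=\Phi\,\square\,\theta$ with $\theta(x,w)=|w|^2/x$ (the paper's second proof of its Proposition~\ref{finiteextension}), and the $x$-scaled mollification in \eqref{6}--\eqref{7} is the same device you use: a mollifier whose support scales with $x$ so the averaging never crosses $\{x=0\}$, yielding $C^\infty$ on the open half-space while preserving convexity and the trace. Your verifications of (i), (ii), the nonnegativity, and the continuity assertion are all sound and line up with the paper's.

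Where you genuinely diverge is in the strict-convexity assertion, and there your route is considerably heavier than necessary. The paper first shows that the unmollified $\tilde\varphi(x,\cdot)$ is strictly convex in $y$ for each fixed $x>0$ by a soft uniqueness argument (uniqueness of the proximal point combined with strict convexity of $h(y,z)=\Phi(z)+|y-z|^2/x$ on $\R^n\times\dom\Phi$), then invokes its Lemma~\ref{strictc} -- adding a strictly convex function of $x$ alone upgrades partial strict convexity to joint strict convexity -- and finally notes that the mollification \eqref{6} is an average over nondegenerate linear images (for $x'\neq 1$) and so preserves strict convexity. You instead try to prove $\nabla^2_{yy}\varphi_0\succ0$ pointwise, via integration by parts onto the kernel, Alexandrov a.e.\ Hessians of the Moreau envelope, and a delicate argument with the resolvent $J_\mu=\mathrm{Id}-\mu\nabla\Phi_\mu$ showing that $v^{\mathsf T}\nabla^2\Phi_\mu v=0$ on an open set forces $J_\mu$ to translate along $v$-lines and hence $\Phi$ to be affine on a segment. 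That argument is in fact correct (modulo the usual Fubini and a.e.\ line-derivative bookkeeping, which you gesture at but do not spell out), but it is much harder work than is needed: you could have simply observed that each $y\mapsto\Phi_{x(1+s)/2}(y+xr)$ is strictly convex (by exactly the split-point uniqueness you already state for the Moreau envelope), that averaging strictly convex functions against a probability measure preserves strict convexity, and then applied the elementary $\psi(x)$-addition lemma, bypassing second-order differentiability entirely. So the construction is the same; the paper's treatment of strict convexity is more elementary and avoids the measure-theoretic machinery your approach leans on.
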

The following result  is an immediate consequence (setting $\Phi^{(j)}(y)=\varphi(j^{-1},y)$).\\
\begin{corollary}\label{cor}
Let $\Phi:\R^n\to (-\infty,\infty]$ be a proper lower semicontinuous convex function. Then there exists a sequence $\Phi^{(j)}$ of smooth convex functions on $\R^n$ such that $\lim_{j\to \infty}\Phi^{(j)}(y)= \Phi(y)$ for each $y\in \R^n$.\\
\end{corollary}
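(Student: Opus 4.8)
The plan is to read the corollary directly off Theorem~\ref{lifting} by freezing the first variable. Given the proper lower semicontinuous convex function $\Phi:\R^n\to(-\infty,\infty]$, let $\varphi:[0,\infty)\times\R^n\to(-\infty,\infty]$ be the function supplied by Theorem~\ref{lifting}, and set
\[
\Phi^{(j)}(y):=\varphi\bigl(\tfrac1j,y\bigr),\qquad y\in\R^n,\ j=1,2,\dots.
\]

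First I would verify that each $\Phi^{(j)}$ is a finite, smooth, real-valued convex function on $\R^n$. Finiteness and smoothness are immediate from property~(iii), which asserts that $\varphi$ is real-valued and smooth on the open half-space $(0,\infty)\times\R^n$: restricting the smooth map $(x,y)\mapsto\varphi(x,y)$ to the hyperplane $\{x=1/j\}$, which lies in that open half-space, yields a smooth real-valued function of $y$. Convexity follows because $\varphi$ is convex on the convex set $[0,\infty)\times\R^n$ and the slice $\{1/j\}\times\R^n$ is an affine, hence convex, subset; the restriction of a convex function to a convex subset of its domain is convex, so $y\mapsto\Phi^{(j)}(y)$ is convex on $\R^n$.

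Finally, the pointwise convergence is precisely property~(ii): for each fixed $y\in\R^n$, $\lim_{x\to0+}\varphi(x,y)=\Phi(y)$, and since $1/j\to0+$ as $j\to\infty$ we obtain $\lim_{j\to\infty}\Phi^{(j)}(y)=\lim_{j\to\infty}\varphi(1/j,y)=\Phi(y)$. (Property~(i) is not needed for the corollary, though it records that the trace of $\varphi$ at $x=0$ is $\Phi$ itself.)

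There is essentially no obstacle here: the entire analytic content sits in Theorem~\ref{lifting}, and the corollary merely extracts the countable family of slices $x=1/j$ and reads off smoothness, convexity and the limit. The one point that must not be overlooked is that Theorem~\ref{lifting} guarantees $\varphi$ to be finite (not merely $<\infty$) on the open half-space, which is what makes each $\Phi^{(j)}$ an honest smooth convex function on all of $\R^n$ rather than an extended-real-valued one. If desired, the same construction inherits from Theorem~\ref{lifting} the additional features $\Phi^{(j)}\ge0$ when $\Phi\ge0$, and strict convexity of each $\Phi^{(j)}$ when $\Phi$ is strictly convex on $\dom\Phi$.
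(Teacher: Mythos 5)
Your proposal is correct and is exactly the paper's argument: the authors define $\Phi^{(j)}(y)=\varphi(j^{-1},y)$ and read off smoothness, convexity and pointwise convergence from properties (iii), the convexity of $\varphi$, and (ii) respectively. Nothing further is needed.
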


The theorem applies, for example, to the case  when $\Phi$ is the indicator function $i_K$ of a nonempty closed convex subset $K\subset\R^n$, defined by
$$i_K(y)=\left\{\begin{array}{cc}0&\text{if }x\in K\\
\infty&\text{if }x\not\in K.\end{array}\right.$$
With $K=\{0\}$ a suitable smooth strictly convex extension is then given by $\varphi(x,y)=\theta(x,y) -\frac{x}{x+1}$, where
 \be
 \label{theta}
 \theta(x,y)=\left\{\begin{array}{cc}
 \frac{|y|^2}{x},&\text{if }x>0, y\in\R^n,\\
 0,&\text{if }x=0, y=0,\\
 +\infty,& \text{otherwise},
 \end{array}
 \right.
\ee
which follows as a special case of \eqref{formula} (or  \eqref{infconv}) below. (We note that with $y$ momentum and $x$ density the convexity of $\theta$ plays an important role in optimal transport, as noted in \cite{benamoubrenier}.)

The theorem was motivated by the problem of proving the existence of energy minimizers in 3D nonlinear elasticity under the assumption of  polyconvexity of the free-energy density. In   \cite{j26} an apparently weaker version of the polyconvexity condition given in \cite{j8} was used. That this version is indeed weaker follows from Theorem \ref{lifting}, and   this is explained in Section \ref{polyconvexity}.

\section {Proof of Theorem \ref{lifting}}
We first show the existence of an extension $\tilde\varphi$ satisfying (i), (ii), which in addition is (strictly) decreasing in $x$, giving two different proofs. The first proof is the more direct and provides a wide range of possible extensions, while the second uses infimal convolution and is convenient for proving the assertion in the theorem regarding strict convexity.\\

\begin{proposition}
\label{finiteextension}
Under the assumptions of Theorem \ref{lifting} there exists a lower semicontinuous convex extension $\tilde\varphi=\tilde\varphi(x,y)$ of $\Phi$ to $[0,\infty)\times\R^n $ that is finite for $x>0$, decreasing in $x$, and such that $\lim_{x\to 0+}\tilde\varphi(x,y)=\Phi(y)$ for each $y\in\R^n$. \\
\end{proposition}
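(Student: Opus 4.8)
The plan is to build $\tilde\varphi$ explicitly by an infimal-convolution-type formula in the $y$ variable, using the parameter $x>0$ as the "width" of a smoothing/regularizing kernel that degenerates as $x\to 0+$. Concretely, I would try
\[
\tilde\varphi(x,y)=\inf_{z\in\R^n}\left\{\Phi(z)+\frac{|y-z|^2}{x}\right\}\qquad(x>0),
\]
with $\tilde\varphi(0,y)=\Phi(y)$. This is the Moreau–Yosida regularization of $\Phi$ with parameter $x/2$, rewritten so that the quadratic penalty is $\theta(x,y-z)$ with $\theta$ as in \eqref{theta}. Since $\theta$ is jointly convex on $[0,\infty)\times\R^n$ (this is the convexity of the "kinetic energy" functional), the map $(x,y,z)\mapsto\Phi(z)+\theta(x,y-z)$ is jointly convex, and partial minimization of a jointly convex function over $z$ yields a convex function of $(x,y)$; this gives convexity of $\tilde\varphi$. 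Properness of $\Phi$ (pick $z_0\in\dom\Phi$) makes $\tilde\varphi$ finite for every $x>0$, and monotonicity in $x$ is immediate from the formula since $x\mapsto|y-z|^2/x$ is decreasing for each fixed $z$.

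The key steps, in order, are: (1) record joint convexity of $\theta$ on $[0,\infty)\times\R^n$ and hence of the integrand; (2) deduce that $\tilde\varphi$ is convex on $[0,\infty)\times\R^n$ and finite on $(0,\infty)\times\R^n$, with $\tilde\varphi(\cdot,y)$ decreasing; (3) prove lower semicontinuity of $\tilde\varphi$ — on the open half-space this follows from finiteness plus convexity, while at $x=0$ it must be checked against the boundary value $\Phi(y)$; (4) prove the limit relation $\lim_{x\to 0+}\tilde\varphi(x,y)=\Phi(y)$ for each $y$. For (4), the inequality $\limsup_{x\to 0+}\tilde\varphi(x,y)\le\Phi(y)$ comes from taking $z=y$ in the infimum; the matching liminf inequality is where the lower semicontinuity of $\Phi$ is used: if $z_x$ nearly attains the infimum, then either $z_x\to y$, in which case lower semicontinuity of $\Phi$ gives $\liminf\Phi(z_x)\ge\Phi(y)$, or $z_x$ stays a positive distance from $y$, in which case the penalty term $|y-z_x|^2/x\to\infty$, contradicting the $\limsup$ bound already obtained (after disposing of the trivial case $\Phi(y)=\infty$).

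The main obstacle I expect is step (4), specifically the boundary behavior and the case analysis when $\Phi$ is allowed to take the value $+\infty$: one has to rule out escape of near-minimizers to infinity and handle $y\notin\dom\Phi$ and $y\in\partial(\dom\Phi)$ carefully. A secondary technical point is lower semicontinuity of $\tilde\varphi$ across $x=0$, which does not follow automatically from convexity alone since $x=0$ is a boundary point of the domain; here I would use that $\tilde\varphi(x,y)\le\Phi(y)$ for all $x\ge0$ together with the pointwise limit from (4) and the fact that a convex function on a convex set is lower semicontinuous relative to the relative interior, patching in the boundary by the limit relation. Once Proposition~\ref{finiteextension} is in hand, the smoothing to get assertion (iii) of Theorem~\ref{lifting} — mollifying $\tilde\varphi$ in all variables at scale comparable to $x$ and adding a small corrector — is deferred to the later parts of the proof and is not needed here.
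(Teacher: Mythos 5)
Your plan is essentially the paper's second proof of Proposition~\ref{finiteextension}: the infimal convolution $\tilde\varphi=\Phi\,\square\,\theta$ with $\theta$ as in \eqref{theta}, joint convexity of $\theta$, partial minimization for convexity, $z=y$ for the $\limsup$ bound, and lower semicontinuity of $\Phi$ for the $\liminf$. So the overall route is the same.

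Two points in your sketch need to be nailed down, and both are handled in the paper by the same device. First, the obstacle you correctly flag — escape of near-minimizers to infinity, and more generally lower semicontinuity at $x=0$ — is resolved by the fact that a proper lower semicontinuous convex $\Phi$ admits an affine minorant: there exist $\alpha\in\R$, $b\in\R^n$ with $\Phi(z)\geq\alpha+b\cdot z$ for all $z$. Your dichotomy ``either $z_x\to y$, or the penalty blows up'' is not quite enough by itself, because if $z_x$ escapes to infinity then $\Phi(z_x)$ could simultaneously drift to $-\infty$ and a priori cancel the penalty. With the affine lower bound,
\[
\Phi(z)+\frac{|y-z|^2}{x}\;\geq\;\alpha+b\cdot z+\frac{|y-z|^2}{x}\;=\;\alpha+b\cdot y-\frac{|b|^2 x}{4}+\frac{1}{x}\Bigl|y-z+\tfrac{x}{2}b\Bigr|^2,
\]
which both shows $\tilde\varphi(x,y)>-\infty$ for $x>0$ and, for a sequence $(x_j,y_j)\to(0,y)$ along which $\tilde\varphi(x_j,y_j)$ stays below some finite bound, forces the near-minimizers $y'_j\to y$; then lower semicontinuity of $\Phi$ closes the argument in one stroke, covering $y\in\dom\Phi$, $y\in\partial(\dom\Phi)$, and $y\notin\dom\Phi$ uniformly. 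Your proposed patching of lower semicontinuity at $x=0$ via the one-dimensional limit $\lim_{x\to0+}\tilde\varphi(x,y)=\Phi(y)$ for fixed $y$ does not suffice, since lower semicontinuity must hold along arbitrary sequences $(x_j,y_j)\to(0,y)$ with $y_j\ne y$; the estimate above is what does the work. Second, the formula only gives $\tilde\varphi(\cdot,y)$ nonincreasing, not decreasing (take $\Phi\equiv 0$, for which $\tilde\varphi\equiv 0$); the paper notes this and replaces $\tilde\varphi(x,y)$ by $\tilde\varphi(x,y)-x$, which preserves all the other properties. With these two additions your argument is complete and agrees with the paper's second proof.
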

\noindent{\it 1st Proof.} 
We first note that $\Phi$ is the supremum of a family of affine functions:
\be
\label{2} \Phi(y)=\sup_{(\alpha,b)\in S}(\alpha+b\cdot y),\;\text{for all }y\in\R^n,
\ee
for some nonempty set $S\subset\R^{n+1}$. This is a standard result; see, for example, \cite[Proposition 3.1]{ekelandtemam}, \cite [Theorem 12.1]{rockafellar70}. (In Remark \ref{remark} below we note that we can take the family of affine functions to consist of exact affine minorants, but this is not needed for the proof.)

Let $\psi:[0,\infty)\to [0,\infty)$ satisfy 
\be
\label{3}
\lim_{t\to\infty}\frac{\psi(t)}{t}=\infty.
\ee
We claim that 
\be
\label{formula}
\tilde\varphi(x,y):=\sup_{(\alpha,b)\in S}\left(\alpha+b\cdot y-\psi\left(|\alpha|+|b|\right)x\right)
\ee
provides a suitable convex extension. Indeed by \eqref{2} $\tilde\varphi(0,y)=\Phi(y)$ for all $y\in\R^n$, and since it is the supremum of continuous affine functions  $\tilde\varphi$ is convex and lower semicontinuous. 

Given $x>0, y\in \R^n$, by \eqref{3} there exists $M(x,y)>0$ such that 
\be
\label{4}
\frac{\psi(|\alpha|+|b|)}{|\alpha|+|b|}>x^{-1} \max(1,|y|)\;\; \text{    if }|\alpha|+|b|> M(x,y).
\ee
Hence for $|\alpha|+|b|> M(x,y)$ we have
\be
\label{5}
\alpha+b\cdot y-\psi(|\alpha|+|b|)x \leq 0.
\ee
Therefore, since $\psi\geq 0$, $\tilde\varphi(x,y)\leq \max(1,|y|)M(x,y)<\infty$, as required. 

$\tilde\varphi(x,y)$ is nonincreasing in $x$, and can be made decreasing by adding $-x$ to $\tilde \varphi$.

     Since $\tilde\varphi$ is lower semicontinuous 
\be
\label{5a}
\Phi(y)=\tilde\varphi(0,y)\leq \liminf_{x\to 0+}\tilde\varphi(x,y)\leq\limsup_{x\to 0+}\tilde\varphi(x,y)&&\\&&\hspace{-1.1in}\leq\sup_{(\alpha,b)\in S}(\alpha+b\cdot y)=\Phi(y),\nonumber
\ee
so that 
\be
\label{5b}
\lim_{x\to 0+}\tilde\varphi(x,y)=\Phi(y).
\ee
 \\
 
 \noindent{\it 2nd Proof.}
 Define  $\theta:\R^{n+1}\to [0,\infty]$ by \eqref{theta}.
Note that $\theta$ is convex and lower semicontinuous on $\R^{n+1}$;  the convexity  follows, for example, from the identity
\be
\label{thetaconv}
\lambda\theta(x_1,y_1)+(1-\lambda)\theta(x_2,y_2)-\theta(\lambda (x_1,y_1) +(1-\lambda)(x_2,y_2))&\\
\nonumber &\hspace{-1.8in}=\frac{\lambda(1-\lambda)}{\lambda x_1+(1-\lambda)x_2}\left|\sqrt{\frac{x_1}{x_2}}y_2-\sqrt{\frac{x_2}{x_1}}y_1\right|^2\geq 0
\ee
for $\lambda\in[0,1]$ and $(x_1,y_1),(x_2,y_2)\in (0,\infty)\times\R^n$, and examining the behaviour of $\theta$ along lines in $\R^{n+1}$.\\
Let $\tilde\varphi=\Phi\,\square\,\theta$ be the infimal convolution of $\Phi$ and $\theta$ with respect to $y\in\R^n$ defined by
\be
\label{infconv}(\Phi\,\square\,\theta)(x,y)=\inf_{y'\in\R^{n}}\left(\Phi(y')+\theta(x,y-y')\right).
\ee
The convexity of $\Phi$ and $\theta$ implies that the function 
\be
\label{5ba}
h(x,y,y'):=\Phi(y')+\theta(x,y-y')
\ee
is convex on $\R^{2n+1}$. Hence by \cite[Prop. 8.26]{bauschkeetal} $\tilde\varphi(x,y)=\inf_{y'}h(x,y,y')$ is convex in $(x,y)$. Since $\Phi$ is proper, there exists $\bar y\in\R^n$ with $\Phi(\bar y)<\infty$. Therefore for $x>0$ we have that $\tilde\varphi(x,y)\leq \Phi(\bar y)+\frac{|y-\bar y|^2}{x}<\infty$. Also $\tilde\varphi(0,y)=\min\,(\Phi(y),\infty)=\Phi(y)$, so that $\tilde\varphi$ is an extension of $\Phi$. Furthermore, 
\be
\label{5baa}
\Phi(y)\geq \alpha +b\cdot y \text{ for all  }y\in\R^n\text{ and some }\alpha\in\R, b\in\R^n. 
\ee
Hence for $x>0$
\be
\label{5bb}
\tilde\varphi(x,y)\geq \inf_{y'\in\R^n}\left(\alpha+b\cdot y'+\frac{|y-y'|^2}{x}\right)=\alpha +b\cdot y-\frac{|b|^2x}{4}>-\infty,
\ee
so that $\tilde\varphi(x,y)$ is finite, and thus by convexity continuous on $(0,\infty)\times\R^n$.  If $\tilde\varphi$ were not lower semicontinuous there would exist a sequence  $(x_j,y_j)\to(0,y)$ and $y'_j$ with
\be
\label{5bc}
\sup_j\left(\Phi(y_j')+\frac{|y_j-y_j'|^2}{x_j}\right)<\Phi(y).
\ee
In particular the left-hand side of \eqref{5bc} is bounded, and so, using \eqref{5baa}, $y_j'\to y$. Thus by the lower semicontinuity of $\Phi$ the left-hand side is greater than or equal to $ \Phi(y)$, a contradiction. If $x_j\to 0+$ and $y\in\R^n$ then by the lower semicontinuity  $\Phi(y)\leq \liminf_{j\to\infty}\tilde\varphi(x_j,y)\leq \limsup_{j\to\infty}\tilde\varphi(x_j,y)\leq \Phi(y)$, so that $\lim_{x\to 0+}\tilde\varphi(x,y)=\Phi(y)$ as required.

Clearly $\tilde\varphi(x,y)$ is  nonincreasing in $x$. As defined it may not be decreasing (consider the case $\Phi\equiv 0$), but $\tilde\varphi(x,y)-x$ is decreasing in $x$ and satisfies the other requirements.
\qed
\begin{corollary}
\label{strictcor}
Assume in addition to the hypotheses of Proposition \ref{finiteextension} that $\Phi$ is strictly convex on ${\rm dom}\,\Phi$. Then $\tilde\varphi$ can be chosen so that in addition it is strictly convex on $(0,\infty)\times\R^n$.
\end{corollary}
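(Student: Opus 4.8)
The plan is to reuse the infimal‑convolution construction from the 2nd proof of Proposition \ref{finiteextension}, but to ``break the affine degeneracy'' of $\theta$ along the lines on which the identity \eqref{thetaconv} vanishes, by adding a small strictly convex perturbation before convolving. The degeneracy in $\theta$ occurs precisely when $\sqrt{x_1/x_2}\,y_2=\sqrt{x_2/x_1}\,y_1$, i.e.\ along rays of the form $(x,tz)$ with $x\propto t$; so $\theta$ alone never forces strict convexity in $(x,y)$ jointly. However, strict convexity of $\Phi$ on $\dom\Phi$ does supply strictness in the $y'$ variable. First I would record the general principle: if $h(x,y,y')$ is convex on $\R^{2n+1}$, is strictly convex in the $y'$ direction for $(x,y)$ fixed, and the infimum $\tilde\varphi(x,y)=\inf_{y'}h(x,y,y')$ is attained at a unique $y'=y'(x,y)$ depending (say, affinely-nondegenerately) on $(x,y)$, then strict convexity of $h$ along any segment on which $y'(x,y)$ is not constant is inherited by $\tilde\varphi$; the remaining work is to ensure strictness along the segments where $y'(x,y)$ \emph{is} constant.

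Concretely, I would set $\tilde\varphi:=\Phi\,\square\,\theta_\delta - x$, where $\theta_\delta(x,y):=\theta(x,y)+\delta\,\eta(x,y)$ and $\eta$ is a fixed smooth, nonnegative, strictly convex function on $(0,\infty)\times\R^n$ with superlinear growth controlled so that finiteness (the estimate $\tilde\varphi(x,y)\le\Phi(\bar y)+\theta_\delta(x,y-\bar y)<\infty$) and the lower bound \eqref{5bb}-type argument still go through; a convenient choice is $\eta(x,y)=x/(x+1)^{-1}$-type terms plus $|y|^2/x$ already present — more simply, one can take $\theta_\delta(x,y)=\tfrac{|y|^2}{x}+\delta\big(\tfrac{1}{x}+x\big)$ on $(0,\infty)\times\R^n$, extended by $\theta$'s convention elsewhere, which is strictly convex in $(x,y)$ jointly on the open half-space. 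The steps are then: (1) check $\theta_\delta$ is lsc convex on $\R^{n+1}$ and strictly convex on $(0,\infty)\times\R^n$ (an explicit Hessian or a variant of identity \eqref{thetaconv}); (2) verify, exactly as in the 2nd proof, that $\tilde\varphi$ is a finite, continuous, lsc convex extension of $\Phi$ with $\lim_{x\to0+}\tilde\varphi(x,y)=\Phi(y)$, the only change being harmless constants; (3) prove the infimum defining $(\Phi\,\square\,\theta_\delta)(x,y)$ is attained at a \emph{unique} minimizer $y'(x,y)$, using strict convexity of $y'\mapsto\Phi(y')+\theta_\delta(x,y-y')$ together with coercivity; (4) take $(x_1,y_1)\ne(x_2,y_2)$ in $(0,\infty)\times\R^n$, let $(x_\lambda,y_\lambda)$ be the midpoint-type convex combination, and split into two cases. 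If $y'(x_1,y_1)\ne y'(x_2,y_2)$, strict convexity of $\Phi\,\square\,\theta$ in the ``$y'$-moving'' directions gives the strict inequality. If $y'(x_1,y_1)=y'(x_2,y_2)=:y_0$, then evaluating $h$ at the common point $y_0$ shows $\tilde\varphi(x_i,y_i)=\Phi(y_0)+\theta_\delta(x_i,y_i-y_0)$ and $\tilde\varphi(x_\lambda,y_\lambda)\le\Phi(y_0)+\theta_\delta(x_\lambda,y_\lambda-y_0)$, so strict convexity of $\theta_\delta$ on the open half-space — applied to the points $(x_i,y_i-y_0)$, which are distinct because $(x_1,y_1)\ne(x_2,y_2)$ — yields the strict inequality.

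The main obstacle I expect is step (4) in the degenerate case: one must be sure that $(x_1,y_1-y_0)$ and $(x_2,y_2-y_0)$ are genuinely distinct points of $(0,\infty)\times\R^n$ (true, since subtracting the same $y_0$ is a bijection) and that $\theta_\delta$'s strict convexity is \emph{joint} in $(x,y)$, which is exactly why the bare $\theta$ does not suffice and the $\delta(1/x+x)$ term (or any strictly convex-in-$x$ additive piece) is essential — this is the one place the construction of the 2nd proof genuinely has to be upgraded. A secondary point requiring care is that adding $\delta/x$ does not destroy the boundary behaviour; but since $\theta(x,y)\to+\infty$ as $x\to0+$ for $y\ne0$ already, and at $y=0$ we can absorb the $\delta/x$ by noting the infimal convolution still lets $y'\to y$, the limit $\lim_{x\to0+}\tilde\varphi(x,y)=\Phi(y)$ survives; alternatively one replaces $\delta/x$ by $\delta x/(x+1)$-type bounded terms that vanish no slower, trading a little strictness near $x=0$ for cleaner boundary control, and then only claims strict convexity on $(0,\infty)\times\R^n$ as the statement requires.
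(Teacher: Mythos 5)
Your proposal is essentially correct and follows the same route as the paper: form the infimal convolution $\Phi\,\square\,\theta$ and then add a strictly convex function of $x$ alone to upgrade to joint strict convexity. Since your perturbation depends only on $x$, the infimum in $y'$ passes through it, so $\Phi\,\square\,(\theta+\psi)=(\Phi\,\square\,\theta)+\psi$ and ``perturbing the kernel'' is literally the paper's construction; moreover your two-case argument in step (4) is a direct unrolling of the paper's Lemma \ref{strictc} combined with the strict-in-$y$ convexity supplied by the unique minimizer $y'(x,y)\in\dom\Phi$. One small correction to your suggested bounded alternative: $x\mapsto x/(x+1)$ is \emph{concave}, so $+\,\delta x/(x+1)$ would actually destroy convexity of $\theta_\delta$ along rays where $\theta$ is affine; the right choice (and the one the paper makes) is $\psi(x)=-x/(x+1)$, which is strictly convex, decreasing, and vanishes at $x=0$, whereas your first candidate $+\,\delta(1/x+x)$ fails because of the blow-up at $x=0$, as you noticed.
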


\begin{proof}
We use the construction in the second proof of Proposition \ref{finiteextension}.  Fix $x>0$. Given $y\in\R^n$, by \eqref{5baa} the minimum of $h(y,z):=\Phi(z)+\frac{|y-z|^2}{x}$ for $z\in\R^n$ is attained by some $z=y'\in\dom\Phi$, and the strict convexity of $\Phi$ on $\dom\Phi$ implies that $y'$ is unique. Given distinct $y,\bar y\in\R^n$ let the corresponding unique minimizers be $y',\bar y'$ respectively.  For $\lambda\in(0,1)$ the strict convexity of $h$ on $\R^n\times\dom\Phi$ implies that
\begin{align}
\nonumber
\tilde\varphi(x,\lambda y+(1-\lambda)\bar y)&\leq
h(\lambda y+(1-\lambda)\bar y,\lambda y'+(1-\lambda)\bar y')\\&<\lambda h(y,y')+(1-\lambda)h(\bar y,\bar y')\\
&=\lambda\tilde\varphi(x,y)+(1-\lambda)\tilde\varphi(x,\bar y).\label{5bd}
\end{align}
Hence $\tilde\varphi (x,y)$ is  strictly convex in $y$. 

To complete the proof we use the following lemma.
\begin{lemma}
\label{strictc}
Let $f:(0,\infty)\times\R^n\to\R$ be convex with $f(x,y)$ strictly convex in $y$ for each $x$. If $\psi:(0,\infty)\to \R$ is strictly convex then $g(x,y):=f(x,y)+\psi(x)$ is strictly convex in $(x,y)$.
\end{lemma}
\begin{proof}
$g$ is convex. If $g$ were not strictly convex then there would exist distinct pairs $(x_1,y_1), (x_2,y_2)$ and $\lambda\in(0,1)$ with
\begin{align}
\label{5be}
f(\lambda x_1+(1-\lambda)x_2),\lambda y_1+(1-\lambda)y_2)+\psi(\lambda x_1+(1-\lambda)x_2)&=\\& \hspace{-2.5in}
\lambda f(x_1,y_1)+(1-\lambda)f(x_2,y_2) +\lambda \psi(x_1)+(1-\lambda)\psi(x_2).\nonumber
\end{align}
It follows from \eqref{5be} and the convexity of $\psi$ that $\psi(\lambda x_1+(1-\lambda)x_2)=\lambda \psi(x_1)+(1-\lambda)\psi(x_2)$, and since $\psi$ is strictly convex we must have $x_1=x_2$. But then \eqref{5be} contradicts the strict convexity of $f(x,y)$ in $y$.\qed
\end{proof}
\noindent Now let $\psi:[0,\infty)\to\R$ be strictly convex and decreasing with $\psi(0)=0$ (for example, $\psi(x)=-\frac{x}{x+1}$). Then, by Lemma \ref{strictc}, $\tilde\varphi(x,y)+\psi(x)$ is a suitable strictly convex extension.\qed
\end{proof}
To complete the proof of Theorem \ref{lifting} we mollify $\tilde\varphi$ as constructed in Proposition  \ref{finiteextension} with an $x$-dependent mollifier. Let $\rho=\rho(x,y)\geq 0$, $\rho\in C_0^\infty(\R^{n+1})$, $\supp \rho\subset (0,1)\times \R^n$, $\displaystyle\int_{\R^{n+1}}\rho\,dx\,dy=1$, and define for $(x,y)\in [0,\infty)\times \R^n$
\be
\label{6}
 \varphi(x,y)=\int_{\R^n}\int_0^1\rho(x',y')\tilde\varphi(x(1-x'),y-xy')\,dx'dy'.
\ee
 The integral is well defined since $\tilde\varphi$ is convex on $[0,\infty)\times\R^n$  and thus bounded below by a linear function, and the convexity of $\tilde\varphi$ also implies that   $\varphi$ is convex. 
Since $\tilde\varphi$ is lower semicontinuous, by Fatou's Lemma (valid because $\tilde\varphi$ is bounded below by a linear function) $\varphi$ is lower semicontinuous.   Furthermore
\be
\label{8}
\varphi(0,y)=\Phi(y).
\ee
 Making the change of variables $u=x(1-x'), v=y-xy'$ we have that for $x>0$
\be
\label{7}
 \varphi(x,y)=x^{-(n+1)}\int_{\R^n}\int_0^\infty\rho\left(\frac{x-u}{x},\frac{y-v}{x}\right)\tilde\varphi(u,v)\,du\,dv,
\ee
from which it follows that $\varphi$ is smooth for $x>0$.

   We next note that for any $y\in\R^n$, the convexity of $\varphi$ implies that
\be
\label{9a}
\varphi(x,y)\leq (1-x)\varphi(0,y)+x\varphi(1,y),
\ee
so that by \eqref{8}
\be
\label{9}
\limsup_{x\to 0+}\varphi(x,y)\leq\Phi(y).
\ee
But also, since $\varphi$ is lower semicontinuous,
\be
\label{11}
\Phi(y)\leq\liminf_{x\to 0+}\varphi(x,y).
\ee
Combining \eqref{9}, \eqref{11} we see that
\be
\label{12}
\lim_{x\to 0+}\varphi(x,y)=\Phi(y).
\ee
If $\Phi\geq 0$, then we can replace $\tilde\varphi$ by  $\max (\tilde\varphi, 0)$, so that $\varphi\geq 0$ also.

Suppose that $\Phi:\R^n\to (-\infty,\infty]$ is continuous, and let $x^{(j)}\to 0+$, $y^{(j)}\to y$ in $\R^n$. If $\Phi(y)=\infty$ then the lower semicontinuity of $\varphi$ implies that $\varphi(x^{(j)},y^{(j)})\to \varphi(0,y)=\infty$. If $\Phi(y)<\infty$ then the continuity of $\Phi$ implies that $\Phi(z)<\infty$ for $|z-y|$ sufficiently small. By what we have proved the sequence $\Phi^{(j)}(z):=\varphi(x^{(j)},z)$ of convex functions  converges pointwise to $\Phi$, and hence by \cite[Theorem 10.8]{rockafellar70} the convergence is uniform on a neighbourhood of $y$, so that again $\varphi(x^{(j)},y^{(j)})\to \varphi(0,y)=\Phi(y)$. Hence $\varphi:[0,\infty)\times\R^n\to (-\infty,\infty]$ is continuous.

Finally, if $\Phi$ is strictly convex on $\dom\Phi$ then by Corollary \ref{strictcor} we can suppose that $\tilde\varphi$ is strictly convex on $(0,\infty)\times\R^n$, so that $\varphi$ is strictly convex on $(0,\infty)\times\R^n$ by \eqref{6}. \qed

\begin{remark}\rm
\label{remark}
 In \eqref{2} we can take $S$ to consist of all points $(\Phi(y_0)-b(y_0)\cdot y_0, b(y_0))$ where $y_0$ belongs to the domain ${\rm dom}\,\partial\Phi$ of the subdifferential $\partial\Phi$ of $\Phi$ and $b(y_0)\in \partial\Phi(y_0)$. That is $\Phi$ is the supremum of all its exact affine minorants. This fact is not typically given in standard texts on convex analysis, although \cite[Corollary 3.21]{phelps} gives such a result for points $y$ where $\Phi(y)<\infty$. The result is stated (for Hilbert spaces) in the paper of Moreau \cite[Section 8.c]{moreau65} (see also \cite[Section 13]{moreau67}), and follows from his theorem \cite[Section 8.b]{moreau65}  (see also \cite[Theorem 24.9]{rockafellar70}) that if $\Phi, \Psi$ are proper lower semicontinuous convex functions with $\partial \Phi(y)\subset \partial \Psi(y)$ for all $y\in\R^n$ then $\Phi=\Psi+c$ for some constant $c$. Indeed if we define
\be
\label{13}
\Psi(y) =\sup_{y_0\in {\rm dom}\,\partial\Phi, b(y_0)\in\partial\Phi(y_0)}\Phi(y_0)+b(y_0)\cdot(y-y_0),
\ee
then $\Phi\geq\Psi$ and for any $y_0\in {\rm dom}\,\partial\Phi$ and $b(y_0)\in\partial\Phi(y_0)$ we have for all $y\in\R^n$
\be
\label{14}
\Psi(y)\geq \Phi(y_0)+b(y_0)\cdot(y-y_0)\geq \Psi(y_0)+b(y_0)\cdot(y-y_0).
\ee
Hence $\Psi(y_0)=\Phi(y_0)$ and therefore $b(y_0)\in\partial\Psi(y_0)$. Hence by the result of Moreau $\Psi=\Phi+c$ for some constant $c$. But ${\rm dom}\,\partial\Phi$ is nonempty (for example because $\partial\Phi$ is maximal monotone) and so $c=0$ and $\Psi=\Phi$.
\end{remark}
\begin{remark}
\label{remark1} It does not seem obvious how to construct a smooth extension $\varphi(x,y)$ that is decreasing in $x$. This does not immediately follow  from the fact that $\tilde\varphi(x,y)$ is decreasing in $x$  because the mollification \eqref{6} averages $\tilde\varphi$ over a range of values of $y'$ that grows with $x$.
\end{remark}
\begin{remark}
\label{remark2}
 If $\Phi$ is not strictly convex on $\dom \Phi$ then the function $\varphi$ cannot in general be chosen to be strictly convex on $\R^n\times (0,\infty)$. Indeed if $\Phi=0$ then  $\varphi$ can only depend on $x$. To see this let $x>0$, $y,y'\in\R^n$ and for $\ep>0$ note that
\be
\label{15a} (x-\ep,y')=\frac{x-\ep}{x}(x,y)+\left(1-\frac{x-\ep}{x}\right)(0,z),
\ee
where $z:=\ep^{-1}(xy'-(x-\ep)y)$, so that by convexity
\be
\label{15b}
\varphi(x-\ep,y')&\leq &\frac{x-\ep}{x}\varphi(x,y)+ \left(1-\frac{x-\ep}{x}\right)\varphi(0,z)\nonumber\\
&=&\frac{x-\ep}{x}\varphi(x,y).
\ee
Letting $\ep\to 0$ we obtain $\varphi(x,y')\leq\varphi(x,y)$. Interchanging $y,y'$ we deduce that $\varphi(x,y)=\varphi(x,y')$ as required.
\end{remark}
\begin{remark}
\label{remark3}
An interesting open problem is to determine the pairs $\Phi_0$ and $\Phi_1$ of proper lower semicontinuous convex functions on $\R^n$  which are such that there is a convex function $\varphi:[0,1]\times \R^n\to(-\infty,\infty]$ that is finite on $(0,1)\times\R^n$ and  interpolates between $\Phi_0$ and $\Phi_1$ in the sense that $\varphi(0,y)=\Phi_0(y),\, \varphi(1,y)=\Phi_1(y)$ for all $y\in\R^n$ and
 \be
\label{15c}\lim_{x\to 0+}\varphi(x,y)=\Phi_0(y), \lim_{x\to 1-}\varphi(x,y)=\Phi_1(y) \text{ for each } y\in\R^n.
\ee
The set of such pairs $(\Phi_0,\Phi_1)$ is clearly convex. In the case $\Phi_0=0$, Remark \ref{remark2} shows that the only possibility is that $\Phi_1$ is constant, while in the case $\Phi_0=i_{\{0\}}$ Example \ref{ex1} below shows that any convex $\Phi_1:\R^n\to\R$ satisfying $\displaystyle\lim_{|y|\to\infty}\frac{\Phi_1(y)}{|y|}=\infty$ is possible.

Setting $C=\{0,1\}\times\R^n$ the problem is seen to be related to that of extending a convex function on $C\subset \R^s$ to a convex function on its convex hull ${\rm co}(C)$. This is studied for $C$ compact in \cite{bucicovschilebl2013} and for general $C$ in \cite{yan2014} (but without any assertion of continuity of the extension as $C$ is approached as in \eqref{15c}). When $C$ is compact and convex the question of extending a smooth convex function on $C$ to a smooth convex function on $\R^s$
is discussed in \cite{azagramudarra2019}.
\end{remark}
We give two examples of explicit constructions of convex extensions, using the two methods in the different proofs of Proposition \ref{finiteextension}. In neither example do we need to mollify $\tilde\varphi$ since it is already smooth.
\begin{example}
\label{ex1}
Let $\Phi=i_{\{0\}}$ be the indicator function of $0$ as described in the introduction. Then ${\rm dom}\,\partial\Phi=\{0\}$ and $\partial\Phi(0)=\R^n$, so that \eqref{formula} with $S$ given by $\{(0,b):b\in\R^n\}$ and $\psi(t)=c_p t^\frac{p}{p-1}$, $p>1$, $c_p=(p-1)p^\frac{-p}{p-1}$ gives
$\tilde\varphi(x,y)=\sup_{b\in\R^n}(b\cdot y-c_p|b|^\frac{p}{p-1}x)$. An elementary calculation then shows that
\be
\label{15}
\tilde\varphi(x,y)=\left\{\begin{array}{cl}\frac{|y|^p}{x^{p-1}},&x>0\\
i_{\{0\}}(y),&x=0 
\end{array}\right. ,
\ee
which is smooth for $x>0$ if $p$ is an even integer. In fact it is not hard to check that a more general convex extension which is smooth for $x>0$ is given by
\be
\label{15az}
\tilde\varphi(x,y)=\displaystyle\left\{\begin{array}{cl}x\eta(\frac{y}{x}),& x>0\\
i_{\{0\}}(y),&x=0
\end{array}\right.,
\ee
where $\eta:\R^n\to\R$ is convex, smooth, and such that $\displaystyle\lim_{|y|\to\infty}\frac{\eta(y)}{|y|}=\infty$.
\end{example}
\begin{example}
\label{log}
Let $n=1$ and 
\be
\label{logf}
\Phi(y)=\left\{\begin{array}{cl}-\ln y,&y>0\\\infty,&y\leq 0\end{array}\right..
\ee
Then an elementary calculation shows that
\begin{align}
\label{log1}
\tilde\varphi(x,y)&:=(\Phi\,\square\,\theta)\,(x,y)\nonumber\\
&=\left\{\begin{array}{cr}-\ln\left(\frac{1}{2}(y+\sqrt{y^2+2x})\right)+\frac{1}{4x}(\sqrt{y^2+2x}-y)^2,&x>0\\\Phi(y),&x=0\end{array}\right..
\end{align}
\end{example}

 \section{Polyconvexity conditions}
\label{polyconvexity}
In this section we give an application of Theorem \ref{lifting} to 3D nonlinear elasticity. Denote by $M^{3\times 3}$ the space of real $3\times 3$ matrices. Consider an elastic body occupying a bounded open set $\om\subset\R^3$ in a reference configuration. The total free energy at a constant temperature corresponding to a deformation $y:\om\to\R^3$ is given by 
\be
\label{16}
I(y)=\int_\om\psi(Dy(x))\,dx,
\ee
where  the free-energy density $\psi:M^{3\times 3}_+\to [0,\infty),$ and 
$M^{3\times 3}_+:= \{A\in M^{3\times 3}:  \det A>0\}$.

To help prevent interpenetration of matter it is usually assumed that 
\be
\label{16a}\psi(A)\to\infty\text{ as }\det A\to 0+,\ee
which implies that if $I(y)<\infty$ then 
$\det Dy(x)>0 \text{ for a.e. }x\in\om$.

In order to prove existence of an absolute minimizer of $I$ it is necessary to suppose, among other hypotheses, that $\psi$ satisfies  a suitable convexity condition. The convexity condition assumed in \cite{j8} (see \cite{ciarlet83} for a clear and more recent exposition) is that $\psi$ is {\it polyconvex}, that is
 there is a convex function $g:M^{3\times 3}\times M^{3\times 3}\times [0,\infty)\to \R\cup\{\infty\}$ such that 
\be 
\label{18}\psi(A)=g(A,\cof A, \det A) \text{ for all  }A\in M^{3\times 3}_+,
\ee
where $\cof A$ denotes the matrix of cofactors of $A$. Given $\delta>0$, define $E_\delta=\{(A,\cof A, \delta):\det A=\delta\}$. Since, as is proved in \cite[Theorem 4.3]{j8},   the convex hull of $E_\delta$ in $M^{3\times 3}\times M^{3\times 3}\times\R\cong\R^{19}$ is equal to $M^{3\times 3}\times M^{3\times 3}\times\{\delta\}$, it follows from \eqref{18} that $g(A,H,\delta)<\infty$ for all $A,H\in M^{3\times 3}$ and $\delta>0$.

In \cite{j8} it was further assumed that $g$ is continuous with $g(A,H,0)=\infty$ for all $A, H\in M^{3\times 3}$. Provided that $\psi(A)\to\infty$ as $|A|\to\infty$ this implies that \eqref{16a} holds.

Later, in \cite{j26} (see also \cite{ciarlet83}) it was observed that existence could be proved if one only assumes \eqref{16a} and that \eqref{18} holds for a convex $g:M^{3\times 3}\times M^{3\times 3}\times(0,\infty)\to\R$. But it is not immediately obvious that this really is a weaker hypothesis. Applying Theorem \ref{lifting} we see that it is.
\begin{theorem}
\label{pc}
There exists a smooth polyconvex function $\psi:M^{3\times 3}_+\to[0,\infty)$ satisfying \eqref{16a} for which the corresponding $g$ is continuous but does not satisfy $g(A,H,0)=\infty$ for all $A,H\in M^{3\times 3}$.
\end{theorem}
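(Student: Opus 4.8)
The plan is to obtain $\psi$ by applying Theorem~\ref{lifting} to a carefully chosen convex function on $\R^{18}\cong M^{3\times 3}\times M^{3\times 3}$, with the determinant playing the role of the lifting variable $x$. The guiding observation is a constraint satisfied by \emph{any} continuous $g$ representing a density obeying \eqref{16a}: if $\det A=0$ then, since $A$ is the limit of matrices $A_\ep$ with $\det A_\ep\to 0+$ (true for every singular $A$, of any rank), we have $g(A_\ep,\cof A_\ep,\det A_\ep)=\psi(A_\ep)\to\infty$ by \eqref{16a}, so continuity forces $g(A,\cof A,0)=\infty$. Hence $g$ must equal $+\infty$ on the whole set $\mathcal Z:=\{(A,\cof A):\det A=0\}$; this rules out a finite-valued choice, but $\mathcal Z$ is thin and closed, so its complement still contains open convex sets. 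In particular it contains a neighbourhood of $(\1,\1)$ (here $\1$ is the identity matrix), because $\det A=0$ forces $|A-\1|\geq|(A-\1)v|=1$ for a unit vector $v\in\ker A$; that is where we shall make $g$ finite.

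Concretely, I would fix the open unit ball $U:=\{(A,H):|A-\1|^2+|H-\1|^2<1\}\subset\R^{18}$, which by the above is disjoint from $\mathcal Z$, and let $\Phi:\R^{18}\to(-\infty,\infty]$ equal $(1-|A-\1|^2-|H-\1|^2)^{-1}$ on $U$ and $+\infty$ elsewhere. Then $\Phi\geq 0$ and $\Phi$ is convex, being on $U$ the convex decreasing function $t\mapsto t^{-1}$ composed with the positive concave function $(A,H)\mapsto 1-|A-\1|^2-|H-\1|^2$; since $\Phi$ tends to $+\infty$ at $\partial U$, it is moreover proper ($\Phi(\1,\1)=1$), lower semicontinuous, and continuous as a map into $(-\infty,\infty]$; and $\Phi(A,\cof A)=+\infty$ whenever $\det A=0$. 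Applying Theorem~\ref{lifting} to $\Phi$ (with $n=18$) yields a lower semicontinuous convex $\varphi:[0,\infty)\times\R^{18}\to(-\infty,\infty]$ that is nonnegative, continuous, smooth and finite on $(0,\infty)\times\R^{18}$, and satisfies $\varphi(0,\cdot)=\Phi$. Then I would set
$$g(A,H,\delta):=|A|^2+|H|^2+\varphi(\delta,(A,H)),\qquad \psi(A):=g(A,\cof A,\det A).$$

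The remaining checks are routine. The function $g$ is convex (a sum of convex functions, the last one precomposed with a linear map) and continuous, so $\psi$ is polyconvex by construction; on $M^{3\times 3}_+$ one has $\det A>0$, so $\varphi(\det A,(A,\cof A))$ is finite and, being the composition of the smooth $\varphi$ with the smooth map $A\mapsto(\det A,(A,\cof A))$ into $(0,\infty)\times\R^{18}$, is smooth, whence $\psi$ is smooth, finite, and $\geq|A|^2\geq 0$ because $\varphi\geq 0$; thus $\psi:M^{3\times 3}_+\to[0,\infty)$ is smooth and polyconvex. Also $g(\1,\1,0)=|\1|^2+|\1|^2+\Phi(\1,\1)<\infty$, so $g$ does not satisfy $g(A,H,0)=\infty$ for all $A,H$. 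The one genuine point is \eqref{16a}: given $(A_k)\subset M^{3\times 3}_+$ with $\det A_k\to 0+$, if $(A_k)$ is unbounded then $|A_k|^2\to\infty$ along a subsequence and $\psi(A_k)\to\infty$ (as $\varphi\geq 0$), while if $(A_k)$ is bounded we pass to a subsequence with $A_k\to A_*$, note $\det A_*=0$ and $\cof A_k\to\cof A_*$, and use lower semicontinuity of $\varphi$ together with $\Phi(A_*,\cof A_*)=+\infty$ to get $\varphi(\det A_k,(A_k,\cof A_k))\to\infty$, hence $\psi(A_k)\to\infty$; since every subsequence has such a further subsequence, $\psi(A_k)\to\infty$, which is \eqref{16a}.

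I expect the main obstacle to be conceptual rather than computational: recognising that continuity together with \eqref{16a} forces $g\equiv+\infty$ on all of $\mathcal Z$ — so one cannot simply take $\Phi$ finite-valued — while seeing that this is nonetheless compatible with $g$ being finite near $(\1,\1,0)$, because $\{g=+\infty\}$, unlike $\dom g$, need not be convex. The constructive content is then to arrange $\Phi$ to be simultaneously finite near $(\1,\1)$, identically $+\infty$ on $\mathcal Z$, and continuous (so that Theorem~\ref{lifting} returns a \emph{continuous} $\varphi$), and to include the coercive terms $|A|^2+|\cof A|^2$ so that \eqref{16a} also holds for unbounded sequences.
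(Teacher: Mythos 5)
Your proposal is correct and follows essentially the same route as the paper: construct a continuous proper convex $\Phi$ on $M^{3\times 3}\times M^{3\times 3}$ that is finite on a ball around $(\1,\1)$ (disjoint from $\{(A,\cof A):\det A=0\}$) and $+\infty$ elsewhere, lift it via Theorem~\ref{lifting}, and add a coercive quadratic to force \eqref{16a} along unbounded sequences. The only differences are cosmetic (the precise barrier function defining $\Phi$, and adding $|A|^2+|H|^2$ to $g$ rather than $|A|^2$ to $\psi$); your preliminary observation that continuity plus \eqref{16a} forces $g=\infty$ on all of $\mathcal Z$, which explains why $\Phi$ cannot be taken finite-valued, is a correct and useful piece of motivation that the paper leaves implicit.
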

\begin{proof}
Let $V=\{(A,\cof A):\det A=0\}$.  Then $V$ is a closed subset of $M^{3\times 3}\times M^{3\times 3}$ and $(\1,\1)\not\in V$, where $\1$ is the identity $3\times 3$ matrix. Let $r>0$ be such that $\det A>0$ if $|A-\1|\leq r$, where $|\cdot|$ denotes the Euclidean norm on $M^{3\times 3}\cong\R^9$. Define $\Phi:M^{3\times 3}\times M^{3\times 3}\to [0,\infty]$ by
\be
\label{19}
\Phi(A,H)=\left\{\begin{array}{cc} \frac{1}{r^2-|A-\1|^2}+\frac{1}{r^2-|H-\1|^2}&\text{if }|A-\1|<r, |H-\1|<r\\ \infty&\text{otherwise}.\end{array}\right.
\ee
Then $\Phi$ is convex and continuous, so that by Theorem \ref{lifting} there exists a continuous convex  function $g:M^{3\times 3}\times M^{3\times 3}\times [0,\infty)\to [0,\infty]$ such that $g(A,H,0)=\Phi(A,H)$ for all $A,H\in M^{3\times 3}$ and $g(A,H,\delta)$ is smooth for $\delta>0$. Define $\psi(A)=g(A,\cof A,\det A)+|A|^2$. Then $\psi:M^{3\times 3}_+\to [0,\infty)$ is smooth and polyconvex, $\psi(A)\geq|A|^2$ and  $g(\1,\1,0)<\infty$. If $\det A^{(j)}\to 0+$ then we may assume either that $|A^{(j)}|\to \infty$, in which case $\psi(A^{(j)})\to\infty$, or  that $A^{(j)}\to A\in M^{3\times 3}$ with $\det A=0$, when 
$\psi(A^{(j)})=g(A^{(j)},\cof A^{(j)},\det A^{(j)})\to g(A,\cof A,0)=\infty$. Hence \eqref{16a} holds.\qed
\end{proof}

From the point of view of mechanics, Theorem \ref{pc} is unsatisfactory because the $\psi$ constructed does not satisfy the physically necessary 
 {\it frame-indifference condition}
\be
\label{20}
\psi(RA)=\psi(A)\text{ for all } R\in SO(3), A\in M^{3\times 3}_+,
\ee
which is not used for the proofs of existence in \cite{j8,j26}. In addition one would  like an example which is also {\it isotropic}, so that
\be
\label{iso}
\psi(AQ)=\psi(A) \text{ for all }Q\in SO(3), A\in M^{3\times 3}_+.
\ee
However we can adapt  Example \ref{ex1}  to give a frame-indifferent and isotropic example.\\

\begin{example}
\label{fiex5}
The frame-indifferent and isotropic function
\be
\label{fiex}
\psi(A)=\frac{|A|^2}{\det A}
\ee
is polyconvex with corresponding $g:M^{3\times 3}\times M^{3\times 3}\times[0,\infty)\to[0,\infty]$ given by
\be
\label{fiex1}
g(A,H,\delta)=\left\{\begin{array}{cl}\frac{|A|^2}{\delta},&A,H\in M^{3\times 3}, \delta>0\\0,&(A,H,\delta)=(0,0,0)\\
\infty,&\text{otherwise}
\end{array}\right.
\ee
and $\psi(A)\to\infty$ as $\det A\to 0+$.\\

\noindent That $g$ is convex and lower semicontinuous follows as for $\theta$ (see \eqref{thetaconv}), while Hadamard's inequality $|A|^3\geq 3^\frac{3}{2}\det A$ implies that $\psi(A)\geq 3(\det  A)^{-\frac{1}{3}}$. 
\end{example} 

If $\psi$ is polyconvex and frame-indifferent, we can without loss of generality suppose that the corresponding $g$ satisfies the invariance condition
\be
\label{invariance}
g(RA,RH,\delta)=g(A,H,\delta) \text{ for all }R\in SO(3), A,H\in M^{3\times 3}, \delta\in[0,\infty).
\ee
Indeed we can replace $g$ by
\be
\label{inv1}
\tilde g(A,H,\delta)=\av_{SO(3)} g(RA,RH,\delta)\,d\mu(R),
\ee
where $$\av_{SO(3)}f(R)\,d\mu(R):=\frac{\int_{SO(3)}f(R)\,d\mu(R)}{\mu(SO(3))}$$ and $\mu$ denotes Haar measure on $SO(3)$. Then $\tilde g$  satisfies \eqref{invariance}, is convex, and by \eqref{20} and the relation $\cof (RA)=R\,\cof A$ we have 
\begin{align}
\label{inv2}
\psi(A)&=g(A,\cof A,\det A)\nonumber\\&=\av_{SO(3)}g(RA,R\,\cof A,\det A)\,d\mu(R)\nonumber\\&=\tilde g(A,\cof A,\det A).
\end{align}
But, as is well known, $0$ belongs to the convex hull of $SO(3)$. Explicitly, $0= \frac{1}{4}\sum_{i=0}^3R_i$, where $R_0=\1, R_i=-\1+e_i\otimes e_i$ for $i=1,2,3$, and $e_i$ is the unit vector in the $i^{\rm th}$ coordinate direction. So for any $A,H$
\be
\tilde g(0,0,0)&=&\tilde g\left(\sum_{i=0}^3\frac{1}{4}R_iA,\sum_{i=0}^3\frac{1}{4}R_iH,0\right)\nonumber\\
&\leq&\sum_{i=0}^3\frac{1}{4}\tilde g(R_iA,R_iH,0)\nonumber\\
&=&\left(\sum_{i=0}^3\frac{1}{4}\right)\tilde g(A,H,0)=\tilde  g(A,H,0),\label{22}
\ee
so that $\tilde g(0,0,0)=\infty$ implies $\tilde g(A,H,0)=\infty$ for all $A,H$. Thus to construct an example we need $\tilde g(0,0,0)<\infty$, as in Example \ref{fiex5}.

In Theorem \ref{pc} $g(\cdot,\cdot,0)$ is finite on an open subset of $M^{3\times 3}\times M^{3\times 3}$. However no such example is possible for $\tilde g$. Indeed, if $\tilde g(\cdot,\cdot,0)<\infty$  on an open set $U\subset M^{3\times 3}\times M^{3\times 3}$ then $\tilde g(A,H,0)<\infty$ for $(A,H)$ in the open set $\{(RA,RH):(A,H)\in U, R\in SO(3)\}$, the convex hull of which is therefore open, and which contains $(0,0)$ by \eqref{22}. Therefore $g(A,H,0)<\infty$ for $(A,H)$ in some open ball $B(0,r)$ with centre $0\in M^{3\times 3}\times M^{3\times 3}$. Since $g(\cdot,\cdot,0)$ is convex, it is continuous and bounded on $B(0,r/2)$. Similarly $\tilde g(\cdot,\cdot, 1)$ is bounded on $B(0,r/2)$, so that by convexity $\tilde g$ is bounded on $B(0,r/2)\times [0,1]$. But then $\psi(A)=\tilde g(A,\cof A,\det A)$ is bounded as $A\to 0$ with $\det A>0$.
\section*{Acknowledgement}\noindent
 JMB was supported by EPSRC through grant  EP/V00204X and is grateful to Gilles Francfort, Jan Kristensen and Mark Peletier for their interest and discussions. CLH was supported by EPSRC through grant EP/L016508/1.

%%=============================================%%
%% For presentation purpose, we have included  %%
%% \bigskip command. please ignore this.       %%
%%=============================================%%

\section*{Declarations}
The authors have no conflict of interest.
\section*{Data availability statement}
There is no data associated with this work.

\end{document}